\documentclass{amsart}
\subjclass[2020]{03C52, 06B15}

\usepackage{graphicx} 
\usepackage[T1,T2A]{fontenc}
\usepackage[utf8]{inputenc}
\usepackage{amsmath}
\usepackage{bbold}
\usepackage{amssymb,latexsym}
\usepackage{xcolor}

\theoremstyle{plain}
\newtheorem{thm}{Theorem}[section]

\newtheorem{rem}[thm]{Remark}

\theoremstyle{definition}
\newtheorem{de}[thm]{Definition}
\newtheorem{exm}[thm]{Example}

\begin{document}
\title[Infinite convex geometries]{Infinite convex geometries with lower semi-modularity and join semi-distributivity}
\author[A. Mata]{Adam Mata}
  \address{Faculty of Mathematics and Information Science\\
  Warsaw University of Technology\\
  Koszykowa 75, 00-662 Warsaw, Poland}
  \email{adam.mata.dokt@pw.edu.pl}

\keywords{lattice, modularity, distributivity, convex geometry, elementary embedding, elementary equivalence, union of chains, direct limit}
\begin{abstract}

The following article treats about convex geometries which are lower semi-modular and join semi-distributive lattices. Firstly, it is shown that there is a class $K$ of infinite convex geometries which can be build out of finite ones by using the construction of a union of a chain. Then it is shown that elements of $K$ preserve lower semi-modularity and join semi-distributivity which are not default properties in the infinite setting. It is also discussed that not all of the infinite convex geometries may be obtained by the means of the union of a chain.   
\end{abstract}

\maketitle

\section{Introduction}
Convex geometries first occurred in \cite{Dilworth}, discovered as finite lower semimodular lattices with a unique irredundant decomposition. Their study as closure operators with the anti-exchange property is carried out in \cite{EdelmanJamison1985}. Since then they have also been studied as lattices in \cite{AGT} and \cite{KiraJB}. In \cite{CzedliSchmidt2012} the authors carried out the research concerning $SPS$ lattices which turn out to be dual to convex geometries of convex dimension 2. As long as they characterize the congruences of $SPS$ lattices (see \cite{CzedliKurusa2019}) hence we obtain this classification for $cdim = 2$ as well. The general survey on the topic of convex geometries may be found in \cite[Chapter 5]{Gratzer2023}. Convex geometries are also duals to antimatroids (see \cite{antimatroids}) and a special type of greedoids (see \cite{greedoids}). The infinite case was presented and studied in both \cite{AGT} and \cite{KiraJB-2}. It turns out that convex geometries (also infinite) even found their application in conditional logic as presented in \cite{MartiPinosio} and \cite{Marti}.

In this paper, we show how to build an infinite convex geometry out of the finite ones and reveal them as an example of class of structures which has the properties of \textit{lower semi-modularity} and \textit{join semi-distributivity}.

This setting of properties is not common in the case of infinite convex geometries as it was shown in both, accordingly, \cite[Section 1]{AGT} and \cite[Theorem 10]{KiraJB-2}.

We start the paper with some preliminaries concerning convex geometries and its representations as well as some model-theoretic tools. Then we show how to define convex geometries by $\forall\exists$ formulas. Later we present the construction of infinite convex out of finite ones as well as the fact that not all finite convex geometries are elementary equivalent. We finish with the construction of infinite convex geometries which are (JSD) and (LSM).

The idea of construction uses classical Tarski-Vaught methods of model theory involving the infinite sum of a chain of finite structures. All of the considerations exploit fundamental facts and theorems from the first-order logic setting.

\section{Preliminaries}

\begin{de}
Let $X$ be a nonempty set. We say that mapping $\alpha: 2^X \rightarrow 2^X$ is an \textbf{algebraic closure operator} if, for all $A, B \subseteq X$:
\begin{itemize}
    \item $A \subseteq \alpha(A)$,
    \item $A \subseteq B \implies \alpha(A) \subseteq \alpha(B)$,
    \item $\alpha(\alpha(A)) = \alpha(A)$,
    \item $\alpha(\varnothing) = \varnothing$.
\end{itemize}
\end{de}

\begin{de}(\cite[Section 2]{EdelmanJamison1985})
A convex geometry is a pair $\mathcal{G} = (X, \alpha)$ where:
\begin{itemize}
    \item $X$ is a nonempty set
    \item $\alpha$ is an \textbf{algebraic closure operator} with \textbf{the anti-exchange property}, i. e.:
    \begin{equation*}
        \tag{AE}
        q \in \alpha(K \cup \{p\}) \implies p \not \in \alpha(K \cup \{q\})
    \end{equation*}
\end{itemize}
where $K \subseteq X$ is $\alpha-closed$, $p,q \in X$ but $p,q \not \in K$. In general, we associate $\alpha$ with the set of all closed sets obtained by $\alpha$, which is $\{\alpha(Y):\ Y \subseteq X\}$.
\end{de}

The above definition is equivalent to the following. 

\begin{de}(\cite[Section 2.1]{Marti})
A convex geometry is a pair $\mathcal{G} = (X, \alpha)$ where $X$ is a nonempty set and $\alpha$ is a family of subsets of $X$ such that:
\begin{itemize}
    \item $\alpha$ is \textbf{closed under intersection}, i. e. for any $\beta \subseteq \alpha$ it holds:
    $$
    \bigcap \beta \in \alpha
    $$
    \item $\alpha$ has an \textbf{anti-exchange property}, i. e. for every $A \in \alpha$ and every $x, y \in X$ with $x, y \in X$ and $x \not = y$ there is $B \in \alpha$ with $A \subseteq B$ such that $x \in B$ and $y \not \in B$, or $x \not \in B$ and $y \in B$.
\end{itemize}

\end{de}

In case of finite convex geometries, a set of closed sets under intersection may be also introduced by an idea of downsets of finite generating chains. This notion is introduced and developed in \cite{EdelmanJamison1985}. It was proven in this paper, that any finite convex geometry may be obtained by the following construction:

Let's take a finite set $X = \{x_1, x_2, \dots x_n\}$. We take $K$ different chains composed out of elements from $X$:

\begin{itemize}
    \item $C_1 = x_{i^1_1} < x_{i^1_2} < \dots < x_{i^1_n}$,
    \item $C_2 = x_{i^2_1} < x_{i^2_2} < \dots < x_{i^2_n}$,
    \item $\dots$
    \item $C_K = x_{i^K_1} < x_{i^K_2} < \dots < x_{i^K_n}$.
\end{itemize}

Let $\mathfrak{D}(C_i)$ denote a family of downsets of the chain $C_i$. For example:

\begin{equation}
    \mathfrak{D}(C_1) = \{\varnothing, \{x_{i^1_1}\}, \{x_{i^1_1}, x_{i^1_2}\}, \dots, X\}
\end{equation}

Set $\mathfrak{D} = \bigcup_{i = 1}^K \mathfrak{D}(C_i)$. Let $\mathfrak{D}^{\cap}$ be a set $\mathfrak{D}$ closed under intersection. Then $\mathcal{G} = (X, \mathfrak{D}^{\cap})$ is a convex geometry. A minimal number of chains needed to generate a particular convex geometry is called a convex dimension and is denoted as $cdim(\mathcal{G})$.

We exploit this schema of construction further in the paper.
\section{Universal and $\forall\exists$ Formulas - Preservation by a Union of a Chain}

\begin{de}
A universal formula is a formula in a form:

\begin{equation*}
    \forall x_1 \forall x_2 \dots \forall x_n: A(x_1, x_2, \dots x_n), 
\end{equation*}

where $A(x_1, x_2, \dots x_n)$ is a formula which does not contain any quantifier and $n$ may be equal to $0$. Further, a $\forall\exists$ formula is formula in the form:

\begin{equation*}
    \forall x_1 \forall x_2 \dots \forall x_n \exists y_1 \exists y_2 \dots \exists y_m: A(x_1, x_2, \dots x_n, y_1, y_2, \dots y_m),
\end{equation*}

where $A(x_1, x_2, \dots x_n, y_1, y_2, \dots y_m)$ is a formula which does not contain any quantifier and $n$ or $m$ (or both) may be equal to $0$.
\end{de}

Let us notice that every universal formula is an instance of a $\forall\exists$ formula. Further, we introduce an idea of a union of a chain of models. The following facts may be found in \cite[Chapter 8]{CoriLascar}.
\subsection{A union of a chain of models}

\begin{de}
\cite[Definition 8.1]{CoriLascar}
Let $L$ be a first order language, let $\mathcal{M}$ be an $L$-structure and let $\mathcal{N}$ be a substructure of $\mathcal{M}$. We say that $\mathcal{N}$ is an elementary substructure of $\mathcal{M}$ if, for every formula $F(x_1, x_2, \dots x_k)$ and elements $n_1, n_2, \dots n_k \in N$ we have:

\begin{equation*}
\mathcal{M} \models F(n_1, n_2, \dots n_k) \iff \mathcal{N} \models F(n_1, n_2, \dots n_k).
\end{equation*}

\end{de}

\begin{de}
\cite[Definition 8.10]{CoriLascar}
A function $h: M \rightarrow N$ is an elementary mapping if, for every formula $F(x_1, x_2, \dots x_k)$ and elements $m_1, m_2, \dots m_k \in M$ we have:

\begin{equation*}
\mathcal{M} \models F(m_1, m_2, \dots m_k) \iff \mathcal{N} \models F(h(m_1), h(m_2), \dots h(m_k)).
\end{equation*}

In such a case we say that $\mathcal{M}$ can be elementary embedded into $\mathcal{N}$.

\end{de}

Remarkable observation is that any elementary mapping is in fact an embedding. It is sufficient to consider a formula $A(x_1, x_2) = x_1 \simeq x_2$ to notice so. Let $h$ be a monomorphism from $\mathcal{M}$ into $\mathcal{N}$. It is an elementary mapping if and only if the image of $h$ is an elementary substructure of $\mathcal{N}$.

Let $\mathcal{M}$ be an $L$-structure. The complete theory of $\mathcal{M}$, denoted by $Th(\mathcal{M})$, is a set of formulas:

\begin{equation*}
    Th(\mathcal{M}) = \{\phi: \phi \text{ is a closed formula of $L$ and } \mathcal{M} \models \phi\}.
\end{equation*}

If $\mathcal{M}$ and $\mathcal{N}$ are both $L$-structures, we say that $\mathcal{M}$ and $\mathcal{N}$ are elementary equivalent if $Th(\mathcal{M}) = Th(\mathcal{N})$, what is denoted by $\mathcal{M} \equiv \mathcal{N}$. The following theorem gives a tool to discover whether two structures are elementary equivalent.

\begin{thm}
\label{equivthm}
\cite[Theorem 8.22]{CoriLascar}
Let $\mathcal{M}_1$ and $\mathcal{M}_2$ be $L$-structures. $\mathcal{M}_1 \equiv \mathcal{M}_2$ if and only if there is some third $L$-structure into which both $\mathcal{M}_1$ and $\mathcal{M}_2$ can be embedded. 
\end{thm}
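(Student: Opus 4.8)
The plan is to prove both implications, with the forward direction (from a common structure to elementary equivalence) being routine and the converse requiring the compactness theorem together with the elementary diagrams of the two structures. Throughout, ``embedded'' is read as ``elementarily embedded'' in the sense fixed above, since a plain monomorphism preserves only quantifier-free formulas and hence could not witness equality of the complete theories.

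First I would dispose of the easy direction. Suppose there is an $L$-structure $\mathcal{N}$ together with elementary embeddings $h_1 : \mathcal{M}_1 \to \mathcal{N}$ and $h_2 : \mathcal{M}_2 \to \mathcal{N}$. Applying the defining property of an elementary mapping to closed formulas (which have no free variables, so the choice of the witnessing elements is vacuous) gives $\mathcal{M}_1 \models \phi \iff \mathcal{N} \models \phi$ and likewise for $\mathcal{M}_2$, for every closed $L$-formula $\phi$. Hence $Th(\mathcal{M}_1) = Th(\mathcal{N}) = Th(\mathcal{M}_2)$, that is, $\mathcal{M}_1 \equiv \mathcal{M}_2$.

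For the converse, assume $\mathcal{M}_1 \equiv \mathcal{M}_2$. I would introduce two disjoint families of fresh constant symbols, $\{c_a : a \in M_1\}$ and $\{d_b : b \in M_2\}$, and pass to the language $L' = L \cup \{c_a\} \cup \{d_b\}$. Let $\Delta_1$ be the elementary diagram of $\mathcal{M}_1$, i.e. the set of all $L \cup \{c_a\}$-sentences true in $\mathcal{M}_1$ when each $c_a$ is interpreted as $a$, and let $\Delta_2$ be the corresponding elementary diagram of $\mathcal{M}_2$ in $L \cup \{d_b\}$. The goal is to show that $\Delta_1 \cup \Delta_2$ is consistent: any model $\mathcal{N}$ of it yields, by the elementary diagram lemma, an elementary embedding of $\mathcal{M}_1$ via $a \mapsto c_a^{\mathcal{N}}$ and of $\mathcal{M}_2$ via $b \mapsto d_b^{\mathcal{N}}$, which is exactly the third structure required.

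The heart of the argument, and the step I expect to be the main obstacle, is establishing this consistency. By the compactness theorem it suffices to satisfy an arbitrary finite subset, which we may enclose in a single sentence $\phi \wedge \psi$, where $\phi$ conjoins finitely many members of $\Delta_1$ and $\psi$ finitely many members of $\Delta_2$. Write $\phi = \phi(c_{a_1}, \dots, c_{a_k})$, displaying its constants; since none of the $c_{a_i}$ occur in $\psi$, an inconsistency $\psi \models \neg \phi(c_{a_1}, \dots, c_{a_k})$ would, by generalizing on these constants, give $\psi \models \neg \exists x_1 \cdots \exists x_k\, \phi(x_1, \dots, x_k)$. Now $\sigma := \exists x_1 \cdots \exists x_k\, \phi(x_1, \dots, x_k)$ is a pure $L$-sentence, and it holds in $\mathcal{M}_1$ (witnessed by $a_1, \dots, a_k$), hence $\sigma \in Th(\mathcal{M}_1) = Th(\mathcal{M}_2)$, so $\mathcal{M}_2 \models \sigma$. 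But $\psi$ is true in $\mathcal{M}_2$ under the interpretation of the $d_b$, forcing $\mathcal{M}_2 \models \neg \sigma$, a contradiction. Thus $\phi \wedge \psi$ is satisfiable, every finite subset of $\Delta_1 \cup \Delta_2$ has a model, and compactness completes the proof. The delicate point is the ``separation of constants'': one must keep the two constant families disjoint so that the constants of one diagram can be generalized away against the other, and one must invoke the elementary diagram lemma to upgrade a mere model of $\Delta_1 \cup \Delta_2$ into the two required elementary embeddings.
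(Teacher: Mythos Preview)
The paper does not supply its own proof of this theorem; it merely quotes it from Cori--Lascar \cite[Theorem 8.22]{CoriLascar}. Your argument is the standard one---compactness applied to the union of the two elementary diagrams, with the ``separation of constants'' trick to reduce a putative inconsistency to a contradiction with $Th(\mathcal{M}_1)=Th(\mathcal{M}_2)$---and it is correct. Your caveat that ``embedded'' must be read as ``elementarily embedded'' is exactly right and is consistent with the paper's surrounding definitions; with mere monomorphisms the statement would be false.
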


Let us consider an infinite, totally-ordered set of indices $(I, <)$, such that for every $i \in I$, $\mathcal{M}_i$ is an $L$-structure and if $i < j$ then $\mathcal{M}_i$ is a substructure of $\mathcal{M}_j$. Set $M = \bigcup_{i \in I} M_i$. Then we can create a structure $\mathcal{M}$ also denoted as $\bigcup_{i \in I} \mathcal{M}_i$, such that for every $i \in I$, $\mathcal{M}_i$ is a substructure of $\mathcal{M}$. Furthermore for any relational symbol $R \in L$ and $m_1, m_2, \dots m_k \in M_i$:

\begin{equation*}
(m_1, m_2, \dots m_k) \in R^{\mathcal{M}_i} \iff (m_1, m_2, \dots m_k) \in R^{\mathcal{M}}. 
\end{equation*}

Constructing $\mathcal{M}$, we proceed with function and constant symbols in the same manner.

\begin{de}
\cite[Definition 8.42]{CoriLascar}
Let $T$ be a theory in a language $L$. $T$ is preserved by a union of chains if every union of a chain of models of $T$ is a model of $T$.    
\end{de}

The following theorem is a direct consequence of the previous definition.

\begin{thm}
\label{e-map-sum-thm}
Let $(I, <)$ be a totally-ordered set and, for every $i \in I$, let $\mathcal{M}_i$ be an $L$-structure. Further, assume that if $i < j$ then $\mathcal{M}_i$ is an elementary substructure of $\mathcal{M}_j$. Then for every $i \in I$, $\mathcal{M}_i$ is an elementary substructure of $\mathcal{M}$. 
\end{thm}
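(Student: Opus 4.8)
The plan is to prove, by induction on the complexity of formulas, the stronger statement that for every formula $F(x_1, \dots, x_k)$, every index $i \in I$, and all $a_1, \dots, a_k \in M_i$, one has
\[
\mathcal{M}_i \models F(a_1, \dots, a_k) \iff \mathcal{M} \models F(a_1, \dots, a_k).
\]
Quantifying this over all $i \in I$ is precisely the assertion that each $\mathcal{M}_i$ is an elementary substructure of $\mathcal{M}$. It is essential that the induction hypothesis range over \emph{all} indices at once, since the quantifier step will invoke it at an index possibly larger than $i$.

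For the base case I would take $F$ atomic. By the construction of $\mathcal{M}$ recalled before the statement, every relational, functional, and constant symbol is interpreted in $\mathcal{M}$ compatibly with its interpretation in each $\mathcal{M}_i$; hence $\mathcal{M}_i$ is a substructure of $\mathcal{M}$, and atomic formulas (equalities and relational atoms built from the given parameters) hold in $\mathcal{M}_i$ exactly when they hold in $\mathcal{M}$. The steps for the Boolean connectives $\neg, \wedge, \vee, \rightarrow$ are then immediate, since the truth value of a propositional combination is determined by the truth values of its immediate subformulas, to each of which the induction hypothesis applies with the same parameters drawn from $M_i$.

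The heart of the argument is the existential step, $F = \exists y\, G(x_1, \dots, x_k, y)$. The forward direction is routine: a witness $b \in M_i$ for $G$ in $\mathcal{M}_i$ lies in $M$, so by the induction hypothesis $\mathcal{M} \models G(a_1,\dots,a_k,b)$ and therefore $\mathcal{M} \models F(a_1,\dots,a_k)$. The difficulty lies in the converse, and this is the step I expect to be the main obstacle. Suppose $\mathcal{M} \models F(a_1,\dots,a_k)$, so there is a witness $b \in M$ with $\mathcal{M} \models G(a_1,\dots,a_k,b)$. Since $M = \bigcup_{j \in I} M_j$, the witness $b$ belongs to some $M_j$. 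Here I would use that $(I,<)$ is totally ordered to set $\ell = \max(i,j)$, so that $a_1,\dots,a_k \in M_i \subseteq M_\ell$ and $b \in M_j \subseteq M_\ell$. Applying the induction hypothesis at the index $\ell$ yields $\mathcal{M}_\ell \models G(a_1,\dots,a_k,b)$, whence $\mathcal{M}_\ell \models F(a_1,\dots,a_k)$.

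Finally, since $i \leq \ell$, the chain hypothesis guarantees that $\mathcal{M}_i$ is an elementary substructure of $\mathcal{M}_\ell$; applying this to the formula $F$ with the parameters $a_1,\dots,a_k \in M_i$ transports the statement back down and gives $\mathcal{M}_i \models F(a_1,\dots,a_k)$. The universal case then follows by rewriting $\forall y\, G$ as $\neg \exists y\, \neg G$ and combining the negation and existential steps already established. The only delicate point — and the place where both standing hypotheses are indispensable — is this converse: the directedness supplied by the total order of $I$ lets us absorb both the parameters and the newly produced witness into a single member $\mathcal{M}_\ell$ of the chain, while the assumed elementarity between $\mathcal{M}_i$ and $\mathcal{M}_\ell$ is exactly what is needed to pull the existential assertion back into $\mathcal{M}_i$.
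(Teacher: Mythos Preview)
Your proof is correct and is the standard Tarski--Vaught argument for the elementary chain theorem: induction on formula complexity, with the key existential step handled by absorbing a witness from $\mathcal{M}$ into some $\mathcal{M}_\ell$ with $\ell \geq i$ (using that $I$ is totally ordered), applying the induction hypothesis there, and then pulling the existential statement back down to $\mathcal{M}_i$ via the assumed elementarity $\mathcal{M}_i \preceq \mathcal{M}_\ell$.

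The paper, however, does not actually prove this theorem. It merely states it, preceded by the sentence ``The following theorem is a direct consequence of the previous definition,'' and implicitly defers to the reference \cite{CoriLascar}. (That sentence is in fact somewhat misleading: the result is not a trivial consequence of the definition of preservation under unions of chains, but requires precisely the induction you carried out.) So there is nothing in the paper's own text to compare your argument against; you have supplied the standard proof where the paper simply quotes the result.
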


\begin{thm}
\label{union-ae-thm}
\cite[Theorem 8.43]{CoriLascar}
A theory $T$ is preserved by a union of chains if and only if $T$ is equivalent to some $\forall\exists$ theory (a theory composed of $\forall\exists$ formulas only).
\end{thm}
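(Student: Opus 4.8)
\section{Proof proposal}

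The plan is to prove the two implications separately, with the forward (easy) direction a direct preservation argument and the converse the substantive part. First I would treat the easy direction: if $T$ is equivalent to a $\forall\exists$ theory, then $T$ is preserved by unions of chains. Let $(\mathcal{M}_i)_{i\in I}$ be a chain of models of $T$ and let $\mathcal{M}=\bigcup_{i\in I}\mathcal{M}_i$ be its union. Fix a sentence $\forall \bar{x}\exists \bar{y}\, A(\bar{x},\bar{y})\in T$ with $A$ quantifier-free, and any tuple $\bar{a}$ from $M$. Since $\bar{a}$ is finite and $(M_i)$ is totally ordered by inclusion, all coordinates of $\bar{a}$ lie in a single $M_i$; as $\mathcal{M}_i\models T$ there is a witness $\bar{b}\in M_i$ with $\mathcal{M}_i\models A(\bar{a},\bar{b})$, and because $A$ is quantifier-free and $\mathcal{M}_i$ is a substructure of $\mathcal{M}$, also $\mathcal{M}\models A(\bar{a},\bar{b})$. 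Hence $\mathcal{M}\models \forall\bar{x}\exists\bar{y}\,A(\bar{x},\bar{y})$, and since the sentence was arbitrary, $\mathcal{M}\models T$.

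For the converse, assume $T$ is preserved by unions of chains and let $T^{*}$ denote the set of all $\forall\exists$ sentences $\sigma$ with $T\models\sigma$. Every model of $T$ models $T^{*}$, so it suffices to prove the reverse inclusion of model classes: every $\mathcal{A}\models T^{*}$ satisfies $\mathcal{A}\models T$. I would fix such an $\mathcal{A}$ and build an interleaved pair of chains
$$\mathcal{A}=\mathcal{A}_0\subseteq \mathcal{B}_0\subseteq \mathcal{A}_1\subseteq \mathcal{B}_1\subseteq\cdots$$
in which each $\mathcal{B}_n$ is a model of $T$, each inclusion $\mathcal{A}_n\subseteq\mathcal{A}_{n+1}$ is elementary, and every $\mathcal{A}_n$ is elementarily equivalent to $\mathcal{A}$ (hence still a model of $T^{*}$). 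Granting this, the two interleaved chains share a common union $\mathcal{C}$; since $(\mathcal{B}_n)$ is a chain of models of $T$ and $T$ is preserved by unions of chains, $\mathcal{C}\models T$, while the elementary chain $(\mathcal{A}_n)$ gives $\mathcal{A}\preceq\mathcal{C}$ by Theorem \ref{e-map-sum-thm}, so $\mathcal{A}\equiv\mathcal{C}\models T$ and therefore $\mathcal{A}\models T$.

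The heart of the construction, and the step I expect to be the main obstacle, is the passage from $\mathcal{A}_n$ to $\mathcal{B}_n$: I claim that any model $\mathcal{M}\models T^{*}$ embeds into some model $\mathcal{N}\models T$ as an \emph{existentially closed} substructure, meaning that every existential formula with parameters from $M$ that holds in $\mathcal{N}$ already holds in $\mathcal{M}$. This is exactly what makes the subsequent step $\mathcal{B}_n\subseteq\mathcal{A}_{n+1}$ possible: the consistency of $\mathrm{ElDiag}(\mathcal{A}_n)\cup\Delta(\mathcal{B}_n)$, the elementary diagram of $\mathcal{A}_n$ together with the atomic diagram of $\mathcal{B}_n$, is obstructed precisely by an existential formula over $\mathcal{A}_n$ true in $\mathcal{B}_n$ but false in $\mathcal{A}_n$, and existential closedness rules this out, so a model of the combined diagram yields the required elementary extension $\mathcal{A}_{n+1}$ containing $\mathcal{B}_n$.

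To establish the embedding claim I would run a compactness argument on $T\cup\Delta(\mathcal{M})\cup\{\neg\varepsilon:\ \varepsilon\text{ existential over }M,\ \mathcal{M}\models\neg\varepsilon\}$: a hypothetical finite inconsistency would produce a quantifier-free $\delta(\bar{a})$ true in $\mathcal{M}$ and finitely many existential formulas $\varepsilon_1(\bar{a}),\dots,\varepsilon_k(\bar{a})$ with $T\models\delta(\bar{a})\to\bigvee_{j}\varepsilon_j(\bar{a})$. Replacing the parameters $\bar{a}$ by universally quantified variables and collecting the existential quantifiers of the $\varepsilon_j$ turns this into a single $\forall\exists$ consequence of $T$, hence a member of $T^{*}$, whose truth in $\mathcal{M}$ contradicts the choice of the sentences $\neg\varepsilon_j$. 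This reduction of a finite inconsistency to a genuine $\forall\exists$ sentence is the technical crux on which the whole converse rests; the remaining bookkeeping (that each $\mathcal{A}_n\equiv\mathcal{A}$, and that the two chains have equal unions) is routine.
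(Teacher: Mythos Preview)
The paper does not prove this theorem: it is quoted verbatim from \cite[Theorem~8.43]{CoriLascar} and used as a black box, so there is no in-paper argument to compare against. Your proof is correct and is exactly the standard proof of the Chang--\L o\'s--Suszko preservation theorem as one finds it in Cori--Lascar and other model-theory textbooks: the forward direction by the routine observation that $\forall\exists$ sentences transfer to unions of chains, and the converse via the interleaved-chain (``sandwich'') construction, with the compactness step embedding any model of $T^{*}$ existentially closed into a model of $T$ as the crux. The details you sketch---the finite-inconsistency reduction to a single $\forall\exists$ consequence of $T$, and the consistency of $\mathrm{ElDiag}(\mathcal{A}_n)\cup\Delta(\mathcal{B}_n)$ via existential closedness of $\mathcal{A}_n$ in $\mathcal{B}_n$---are all sound, and the invocation of Theorem~\ref{e-map-sum-thm} for the elementary chain $(\mathcal{A}_n)$ is exactly right.
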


\section{Defining finite Convex Geometries by Formulas}
\label{defining_section}

Every finite convex geometry may be considered to be a lattice $\mathcal{G} = (G, \wedge, \vee)$ with two additional conditions imposed, namely \textit{lower semi-modularity} (LSM) and \textit{join semi-distributivity} (JSD) (see \cite[Theorem 1.9]{AGT}). Hence, it is defined by the following formulas:

\begin{itemize}
    \item[(1)] $\forall x : x \vee x = x$,
    \item[(2)] $\forall x : x \wedge x = x$,
    \item[(3)] $\forall x \forall y: x \vee y = y \vee x$,
    \item[(4)] $\forall x \forall y: x \wedge y = y \wedge x$,
    \item[(5)] $\forall x \forall y: (x \vee y) \wedge y = y$,
    \item[(6)] $\forall x \forall y: (x \wedge y) \vee y = y$,
    \item[(7)] $\forall x \forall y \forall z: (x \vee y) \vee z = x \vee (y \vee z)$,
    \item[(8)] $\forall x \forall y \forall z: (x \wedge y) \wedge z = x \wedge (y \wedge z)$,
    \item[(JSD)] $\forall x \forall y \forall z: (x \vee y = x \vee z) \implies (x \vee (y \wedge z) = (x \vee y) \wedge (x \vee z))$,
    \item[(LSM)]$\forall x \forall y: (x \prec (x \vee y)) \implies ((x \wedge y) \prec y)$,
\end{itemize}

where $\prec$ denotes covering relation.

Formulas (1)-(8) and (JSD) are in the universal form. Below, we notice that (LSM) also may be transformed into the universal form. Let us recall that:

\begin{equation}
\tag{$\alpha$}
\forall x \forall y: x \leq y \iff x \vee y = y,     
\end{equation}

as well as:

\begin{equation}
\tag{$\beta$}
\forall x \forall y: x \prec y \iff ((x \not = y)\ \&\ (x \leq y)\ \&\ (\forall z: (x \leq z \leq y) \implies (z = x\ ||\ y = z))).
\end{equation}

Then, using the presented steps, (LSM) may be translated to:
\begin{equation*}
\tag{LSM'}
\forall x \forall y \forall z \forall r: \Phi(x, y, z, r),
\end{equation*}

where $\Phi(x, y, z, r)$ does not contain any quantifier.

Since all of the formulas which define the class of convex geometries are universal formulas and, in particular, $\forall\exists$ formulas then the class of convex geometries is closed under the operation of the union of a chain.

\section{Constructing infinite Convex Geometries out of finite ones}
\label{chapter_4}

The alternative way of representing a finite convex geometry is its generation by finite chains. We use the approach which exploits generating convex geometries by chains defined on a particular finite set (\cite{EdelmanJamison1985}). Convex dimension of a geometry is a minimal number of different chains necessary to generate the geometry. We show by example how to create an infinite convex geometry into which we embed infinite number of finite convex geometries. Let us assume that $I$ is an enumerable set of indices. Finite convex geometries are sequenced in a chain in such a manner that for $i < j$, $G_i$ is a subgeometry of $G_j$.

Let us start with a finite convex geometry, denoted by $\mathcal{G}_1 = (X_1, \alpha_1)$ of convex dimension $K_{\mathcal{G}_1}$. Furthermore let $|X_1|=N_1$. Then $\mathcal{G}_1$ is generated by the set of chains:

\begin{itemize}
    \item $a_{11} < a_{12} < \dots < a_{1N_1}$
    \item $a_{21} < a_{22} < \dots < a_{2N_1}$
    \item $\dots$
    \item $a_{K_{\mathcal{G}_1}1} < a_{K_{\mathcal{G}_1}2} < \dots < a_{K_{\mathcal{G}_1}N_1}$
\end{itemize}
where $a_{ij} \in X$.

To create the next convex geometry we introduce a new element ($X_1 = X \cup \{b\}$) and refactor the generating chains of $\mathcal{G}_1$ in the following manner:

\begin{itemize}
    \item $a_{11} < a_{12} < \dots < a_{1N_1} < b$
    \item $a_{21} < a_{22} < \dots < a_{2N_1} < b$
    \item $\dots$
    \item $a_{K_{\mathcal{G}_1}1} < a_{K_{\mathcal{G}_1}2} < \dots < a_{K_{\mathcal{G}_1}N_1} < b$
    \item $b < a_{1N_1} < a_{1(N_1 - 1)} < \dots < a_{11}$  
\end{itemize}

Taking the family of downsets of such chains and closing it under intersection we obtain a convex geometry $\mathcal{G}_2 = (X_2, \alpha_2)$. Let us notice that in this case $\alpha_1 \subsetneq \alpha_2$. It yields two significant results:

\begin{itemize}
    \item The last chain of $\mathcal{G}_2$ generates $\{b\} \in \alpha_2$ which is not a result of any other downset nor intersection of downsets in the previous chains. Hence the successor geometry is significantly different as the previous one and has convex dimension greater by 1 comparing to the predecessor.
    \item The predecessor geometry is embeddable into the successor one since $\alpha_1 \subseteq \alpha_2$. It is sufficient to pick an embedding $\mathcal{E}: \alpha_1 \hookrightarrow \alpha_2$ which $C \overset{\mathcal{E}}{\mapsto} C$.
\end{itemize}

Proceeding in the same manner we obtain an infinite chain of finite convex geometries in which each predecessor is embeddable in the successor.

Since the class of convex geometries is equivalent to a $\forall\exists$ theory, then by \ref{union-ae-thm}, we obtain that the class of convex geometries is closed under union of a chain. Having a sequence of convex geometries as presented above, denoted $\mathcal{G}_1, \mathcal{G}_2, \mathcal{G}_3, \dots$, by a union we obtain a convex geometry:

\begin{equation*}
    \mathcal{G} = \bigcup_{i \in \omega} \mathcal{G}_i.
\end{equation*}

\begin{exm}
Let $\mathcal{G}_1$ be a convex geometry generated by the following chains:

\begin{itemize}
    \item $a < b < c$,
    \item $b < a < c$.
\end{itemize}

Then $\mathcal{G}_1 = (\{a, b, c\}, \{\varnothing, a, b, ab, abc\})$.

Further, let's introduce $\mathcal{G}_2$ which is generated by:

\begin{itemize}
    \item $a < b < c < d$,
    \item $b < a < c < d$,
    \item $d < a < b < c$,
\end{itemize}

which generates $\mathcal{G}_2 = (\{a, b, c, d\}, \{\varnothing, a, b, d, ab, ad, abc, abd, abcd\})$.

Again, let us introduce $\mathcal{G}_3$:
\begin{itemize}
    \item $a < b < c < d < e$,
    \item $b < a < c < d < e$,
    \item $d < a < b < c < e$,
    \item $e < a < b < c < d$,
\end{itemize}

which gives $\mathcal{G}_3 = (\{a, b, c, d, e\}, \{\varnothing, a, b, d, e, ab, ad, ae, abc, abd, abe, abcd, abce, abcde\})$.

$\mathcal{G}_1$ is clearly embeddable in $\mathcal{G}_2$ and $\mathcal{G}_2$ is embeddable in $\mathcal{G}_3$. Proceeding in the same manner for denumerably many steps we obtain a chain of convex geometries where a former geometry is embeddable in the succeeding one. Taking a union of this chain we also obtain a convex geometry since convex geometry theory is $\forall\exists$.

\end{exm}

\begin{de} \cite[Chapter 7, Section 4]{burbaki}
    Let $(I, \leq)$ be a directed set - a set where for any finite subset of $I$ there exists its upper bound within $I$. Let $\{A_i\ :\ i \in I\}$ be a family of algebras of the same type indexed by $I$. Further, let $f_{ij} : A_i \rightarrow A_j$ be a homomorphism for all $i \leq j$ with the following properties:
    \begin{itemize}
        \item $f_{ii}$ is an identity on $A_i$,
        \item For all $i, j, k \in I$ such that $i \leq j \leq k$ it holds that: $f_{ik} = f_{jk} \circ f_{ij}$.
    \end{itemize}
    then the pair $(A_i, f_{ij})$ is called a direct system over $I$.
    
    Let us introduce an equivalence relation $\sim$ on $A_i \times A_j$. Let $x_i \in A_i$ and $a_j \in A_j$ then $a_i \sim a_j$ if and only if, there is some $k \in I$ such that $i \leq k$ and $j \leq k$ and $f_{ik}(x_i) = f_{jk}(x_j)$.

    The direct limit of the \textbf{direct system} $(A_i, f_{ij})$ is the disjoint union:
    $$
    \underset{i \in I}{\bigsqcup} A_i {\bigg /} \sim
    $$
    The disjoint union of a direct system $(A_i, f_{ij})$ is denoted by $\underset{\longrightarrow}{lim}\ A_i$.
\end{de}

\begin{rem}
Let us notice that union of a chain $\underset{i \in \omega}{\bigcup} \mathcal{G}_i$ is a special case of direct limit of a sequence of structures $\underset{\longrightarrow}{lim}\ \mathcal{G}_i$ where underlying directed set of indices $\langle I, \leq\rangle$ is simply an infinite chain. Hence, in general, all of the operations, relations and constants present in the component structures $\mathcal{G}_i : i \in I$ are obtained in the result structure in the same manner as in the case of direct limit.
\end{rem}

\section{Not all convex geometries are elementary equivalent}

In this section we show that not all finite convex geometries are elementary equivalent even though we are able to construct an infinite sequence of finite convex geometries where the predeceasing one is elementary equivalent to the consequent one.

The following theorem is known as a \textit{Tarski-Vaught Test} and also may be found in \cite[Chapter 8]{CoriLascar} together with its proof.

\begin{thm}[Tarski-Vaught Test]
    \label{tarski-vaught-test}
    Let $\mathcal{M}$ be a structure and $\mathcal{N}$ be the a substructure of $\mathcal{M}$. Assume that for every formula $A(v_0, v_1, \dots, v_n)$ in language $\mathfrak{L}$ and for all elements $a_1, a_2, \dots a_n \in N$ it holds that if:
    $$
    \mathcal{M} \models \exists v_0: A(v_0, a_1, a_2, \dots a_n)
    $$
    then there is $a_0 \in N$ such that:
    $$
    \mathcal{M} \models A(a_0, a_1, a_2, \dots a_n).
    $$
    Then $\mathcal{N} \hookrightarrow \mathcal{M}$.
\end{thm}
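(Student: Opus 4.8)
The plan is to show that the inclusion of $\mathcal{N}$ into $\mathcal{M}$ is an elementary mapping, equivalently that $\mathcal{N}$ is an elementary substructure of $\mathcal{M}$; this is the content of the conclusion $\mathcal{N} \hookrightarrow \mathcal{M}$. Concretely, I would prove that for every formula $F(v_1, \dots, v_k)$ of $\mathfrak{L}$ and every tuple $a_1, \dots, a_k \in N$,
$$\mathcal{N} \models F(a_1, \dots, a_k) \iff \mathcal{M} \models F(a_1, \dots, a_k).$$
The argument is an induction on the complexity of $F$, so I would first fix $\neg$, $\wedge$ and $\exists$ as the primitive logical symbols and regard $\vee$, $\rightarrow$ and $\forall$ as abbreviations, which confines the inductive work to these three cases.

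For the base case I would treat atomic formulas. Since $\mathcal{N}$ is by assumption a substructure of $\mathcal{M}$, every relation, function and constant symbol is interpreted in $\mathcal{N}$ as the restriction of its interpretation in $\mathcal{M}$; consequently an atomic formula evaluated at a tuple from $N$ holds in $\mathcal{N}$ precisely when it holds in $\mathcal{M}$. This step uses only the definition of substructure and requires no appeal to the Tarski--Vaught hypothesis.

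The inductive step for Boolean connectives is routine. If $F = \neg G$, then $\mathcal{N} \models \neg G(a_1,\dots,a_k)$ iff $\mathcal{N} \not\models G(a_1,\dots,a_k)$, which by the induction hypothesis is equivalent to $\mathcal{M} \not\models G(a_1,\dots,a_k)$, i.e. $\mathcal{M} \models \neg G(a_1,\dots,a_k)$; the case $F = G_1 \wedge G_2$ is handled component-wise in the same fashion. The decisive case is the existential quantifier $F = \exists v_0\, G(v_0, a_1, \dots, a_k)$. The direction from $\mathcal{N}$ to $\mathcal{M}$ is immediate: a witness $a_0 \in N$ for $G$ in $\mathcal{N}$ is, by the induction hypothesis, also a witness in $\mathcal{M}$, and since $a_0 \in N \subseteq M$ it witnesses the existential in $\mathcal{M}$. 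The converse direction is where the hypothesis of the theorem enters: assuming $\mathcal{M} \models \exists v_0\, G(v_0, a_1, \dots, a_k)$, the Tarski--Vaught condition supplies some $a_0 \in N$ with $\mathcal{M} \models G(a_0, a_1, \dots, a_k)$, and then the induction hypothesis yields $\mathcal{N} \models G(a_0, a_1, \dots, a_k)$, whence $\mathcal{N} \models \exists v_0\, G(v_0, a_1, \dots, a_k)$. Since $\forall$ was taken as an abbreviation of $\neg \exists \neg$, no separate universal case is needed.

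The only genuinely substantial point, and hence the step I expect to be the main obstacle, is the downward (from $\mathcal{M}$ to $\mathcal{N}$) direction of the existential case: it is exactly here that merely being a substructure is insufficient, and the witness-reflection hypothesis is indispensable for keeping the chosen witness $a_0$ inside $N$. Everything else is either the definition of substructure or a mechanical propagation of the induction hypothesis through the connectives.
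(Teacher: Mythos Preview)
Your argument is correct and is exactly the standard proof of the Tarski--Vaught test: induction on formula complexity, with the substructure assumption handling atomic formulas and the witness-reflection hypothesis handling the downward direction of the existential case. The paper does not supply its own proof of this theorem but simply cites \cite[Chapter 8]{CoriLascar}; your write-up is essentially what one finds there, so there is nothing to contrast.
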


Let $\mathcal{H}$ and $\mathcal{G}$ be finite convex geometries in  language $\mathfrak{L} = \{\vee, \wedge\}$ such that $\mathcal{G}$ is a substructure of $\mathcal{H}$.

Since $\mathcal{G}$ is finite we can construct a formula $A(\mathcal{G})$ as a conjuction of all atomic formulas in $\mathcal{G}$ written with variables $v_g$ where $g \in G$. If $g_1 \vee g_2 = g_3$ in $\mathcal{G}$ then we construct a formula $x_{g_1} \vee x_{g_2} = x_{g_3}$. We proceed for $\wedge$ accordingly.

\begin{rem}
    Further in this section, for any two variables $v_0$ and $v_1$, we use a formula:
    $$
    v_0 > v_1
    $$
    to denote the fact that:
    $$
    v_0 \vee v_1 = v_0\ \&\ \neg(v_0 = v_1),
    $$
    where $\&$ denotes logical conjunction.
\end{rem}

By the construction of $A(\mathcal{G})$ we obtain that there is an element $h \in H$ such that for all $g \in G$ it holds that $h > g$. Therefore formula:

$$
\psi(v_g : g \in G) = \exists x: \phi(x, v_g : g \in G)
$$

where

\[
\phi(x, v_g : g \in G) = A(\mathcal{G})\ {\tiny\&} \underset{g \in G}{\text{\fontsize{15pt}{24pt}\selectfont\&}}(x > v_g)
\]

holds in $\mathcal{H}$ when all variables $v_g$ are replaced by elements $g \in G$. But there is no element in $G$ to replace variable $x$ in this formula $\psi(v_g : g \in G)$ so the formula still holds in $\mathcal{H}$. Hence, the Tarski-Vaught test fails for $\mathcal{G} \hookrightarrow \mathcal{H}$.

\section{Preserving (JSD) and (LSM) by Union of Chains}

Both (JSD) and (LSM) are still valid in the union of finite convex geometries, due to the fact that these properties may be expressed by $\forall\exists$ formulas. This is not obvious in the infinite settings. As discussed in \cite{AGT}, infinite convex geometries, when defined by the anti-exchange property of their closure operator, may not satisfy (JSD).

In case of (LSM) property, some infinite convex geometries may not even have any single cover among their closed sets, like in the case of $\mathbb{R}$ together with operations $min$ and $max$ which is also a convex geometry. This is also true in elementary equivalent chain-geometry $\mathbb{Q}$. Thus (LSM)
becomes vacuous.

The direction of studies in \cite{AGT} and \cite{KiraJB-2} are towards
the classes of infinite convex geometries that mimic properties of finite geometries, in particular, they have properties that allow them to have a lot of covers (algebraic, co-algebraic, weakly atomic, strongly coatomic, spatial, dually spatial etc).

But even in these cases the lower semimodularity may not hold. The following example is given in \cite[Discussion after Theorem 10]{KiraJB-2}. The authors provide an example of an infinite convex geometry $(\omega + 1)^d \times2$ with its atom doubled. This lattice is strongly coatomic, strongly spatial, locally distributive, and even satisfies (JSD), but it is not lower semimodular.

Thus, the unions of chains of finite convex geometries represent a proper subclass of all infinite geometries. 

\begin{thm}
    Every infinite chain of finite convex geometries such that $\mathcal{G}_1 \hookrightarrow \mathcal{G}_2 \hookrightarrow \mathcal{G}_3 \hookrightarrow \dots \hookrightarrow \mathcal{G}_i \hookrightarrow \dots$ when summed results in infinite convex geometry with both (JSD) and (LSM) property.
\end{thm}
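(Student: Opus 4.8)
The plan is to obtain the entire statement as a single application of the preservation theorem for $\forall\exists$ theories, Theorem~\ref{union-ae-thm}, so that nothing about covers or closed sets has to be tracked by hand in the limit. First I would work in the language $\mathfrak{L}=\{\vee,\wedge\}$ and let $T$ be the theory consisting of the lattice axioms (1)--(8) together with (JSD) and (LSM). By \cite[Theorem 1.9]{AGT} a finite lattice is a convex geometry exactly when it satisfies (LSM) and (JSD), so each $\mathcal{G}_i$ in the chain is a model of $T$. Identifying every $\mathcal{G}_i$ with its image under $\mathcal{G}_i\hookrightarrow\mathcal{G}_{i+1}$ presents the chain as an increasing chain of $\mathfrak{L}$-substructures whose union $\mathcal{G}=\bigcup_{i\in\omega}\mathcal{G}_i$ is the union-of-chains geometry built in Section~\ref{chapter_4}.

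The second step is to verify that $T$ is equivalent to a $\forall\exists$ theory. The axioms (1)--(8) are equational and (JSD) is universal, so the only clause needing care is (LSM). Here I would unfold the covering relation by means of the definitions ($\alpha$) and ($\beta$): substituting $x\le y\iff x\vee y=y$ and replacing each occurrence of $u\prec w$ by the conjunction of $u\neq w$ with the clause that forbids a strictly intermediate element turns (LSM) into (LSM'). Pulling the quantifiers that range over these intermediate elements out to the front then places (LSM) inside the $\forall\exists$ fragment; in the worst case the prefix is a block of universal quantifiers followed by a single existential one. Hence $T$ is a $\forall\exists$ theory.

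Granting these two points the conclusion follows at once. By Theorem~\ref{union-ae-thm} every $\forall\exists$ theory is preserved under unions of chains, so $\mathcal{G}_i\models T$ for all $i$ gives $\mathcal{G}\models T$. Thus $\mathcal{G}$ models (1)--(8) and is therefore a lattice, and it satisfies both (JSD) and (LSM); it is moreover a convex geometry because, as noted at the end of Section~\ref{defining_section}, the full convex-geometry theory is itself $\forall\exists$ and so is likewise inherited by the union. Finally $\mathcal{G}$ is infinite, since the cardinalities $|X_i|$ increase without bound along the construction of Section~\ref{chapter_4}.

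I expect the real obstacle to lie entirely in (LSM). The covering relation is not atomic in $\mathfrak{L}$: its defining formula ($\beta$) already conceals a universal quantifier, namely that no element lies strictly between, so a literal reading of (LSM) has the shape $\forall\,((\forall\ldots)\implies(\forall\ldots))$, whose prenex normal form is a priori $\forall\exists\forall$ rather than $\forall\exists$. The work is to show that this apparent complexity genuinely collapses to $\forall\exists$, that is, that (LSM') is a faithful rewriting; once this is established the preservation theorem acts as a black box and nothing remains to be checked in the limit. What the $\forall\exists$ form rules out is precisely the threat that a cover $x\prec x\vee y$ valid in some $\mathcal{G}_n$ could be destroyed in $\mathcal{G}$ by an element introduced at a later stage, so confirming that form is exactly what secures the implication of (LSM) against the newly added elements of the union.
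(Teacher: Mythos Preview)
Your proposal is correct and follows exactly the paper's route: the paper's proof is a one-line appeal to Theorem~\ref{union-ae-thm}, relying on Section~\ref{defining_section}'s claim that the axioms (1)--(8), (JSD) and (LSM) are all $\forall\exists$. Your caution in the final paragraph is well placed---pulling the consequent's $\forall r$ out before the antecedent's (negated) $\forall z$ gives the prenex form $\forall x\,\forall y\,\forall r\,\exists z$, which is genuinely $\forall\exists$ even if the purely universal shape of (LSM$'$) asserted in the paper is optimistic---and since $\forall\exists$ is all Theorem~\ref{union-ae-thm} requires, the argument goes through.
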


\begin{proof}
Let us assume that all of the convex geometries in a chain:
$\mathcal{G}_1 \hookrightarrow \mathcal{G}_2 \hookrightarrow \mathcal{G}_3 \hookrightarrow \dots \hookrightarrow \mathcal{G}_i \hookrightarrow \dots$
are finite for all $i \in I$. Then, by Theorem \ref{union-ae-thm}, $\underset{i \in I}{\bigcup}\mathcal{G}_i$ satisfies (JSD) and (LSM).
\end{proof}

\section{Further research}

An interesting direction for future research would be to investigate whether the class of infinite convex geometries obtained via unions of finite ones admits a characterization by forbidden configurations, analogous to the finite obstruction sets identified for chains of ideals in ordered sets as presented in ~\cite{scattered_old_3}.

Further, another possible direction for research involves exploring the interplay between convex geometries and the theory of scattered posets, particularly in the context of their completions. The results presented in \cite{scattered_old_2} characterize when the MacNeille completion of a poset $P$ is scattered, via a finite set of forbidden substructures. This raises the question:

\begin{quote}
\emph{Can the class of infinite convex geometries obtained via unions of finite ones be characterized similarly by a finite set of forbidden substructures or embeddings?}
\end{quote}

The method of constructing infinite convex geometries by unions of chains of finite ones preserves several properties: lower semi-modularity and join semi-distributivity. These constructions are inherently well-behaved, and it remains open whether a MacNeille-type characterization could distinguish these 'constructible' infinite geometries from the other ones.

In particular, developing an analogue of  in \cite[Theorem 1.3]{scattered_old_2} for infinite convex geometries could yield a new class of forbidden configurations or closure-theoretic obstructions, expanding our understanding of the structure and limits of a convex geometry.

\begin{quote}
\emph{Can every infinite convex geometry arising via chain unions be realized as the closure system of some multichain structure, perhaps generalized to transfinite or scattered index sets?}
\end{quote}

This would bridge the constructibility of infinite convex geometries by union-of-chain construction with existing representations applied in \cite{scattered_new}.

\section{Acknowledgements}

I would like to express the highest gratitude to Prof. Kira Adaricheva from Hofstra University for her comments on the earlier versions of this manuscript. I would like to thank Warsaw University of Technology for providing the grant for visiting Hofstra University on this project in the Fall of 2023 within the SEED - Smart Education for Engineering Doctors project under STER - Internationalization of doctoral schools programme financed by the Polish National Agency for Academic Exchange (Agreement no. PPI/STE/2020/1/00018/U/00001 dated 21.12.2020).

\end{document}